\newtheorem{theorem}{Theorem}[section]
\newtheorem{corollary}[theorem]{Corollary}
\theoremstyle{definition}
\newtheorem{definition}[theorem]{Definition}
\newtheorem{example}[theorem]{Example}
\newtheorem{question}[theorem]{Question}
\theoremstyle{remark}
\numberwithin{equation}{section}
\newcommand{\R}{\mathbb{R}}
\newcommand{\K}{\mathbb{K}}
\newcommand{\lowK}{\mathbbm{k}}
\begin{document}

\title{Extension of local smooth maps of Banach spaces}

\author{Genrich Belitskii}
\address{Department of Mathematics and Computer Science, Ben Gurion University of the Negev, P.O.B. 653, Beer Sheva, 84105, Israel}
\email{genrich@cs.bgu.ac.il}

\author{Victoria Rayskin}
\address{Department of Mathematics, Tufts University, Medford, MA 02155-5597}
\email{victoria.rayskin@tufts.edu}

\subjclass{Primary 26E15; Secondary 46B07, 58Bxx}


\keywords{Bump functions, smooth Banach spaces, Borel lemma}

\begin{abstract}
 It is known that smooth bump functions are absent in the majority of infinite-dimensional Banach spaces. This is an obstacle in the development of local analysis, in particular in the questions of extending local maps onto the whole space. We suggest an approach that substitutes bump functions with special maps, which we call K-maps. It allows us to extend smooth local maps from non-smooth spaces, such as $C^q[0,1], q=0,1,...$. We also prove the Borel lemma for spaces possessing K-maps.

\end{abstract}

\maketitle

\section{Introduction}\label{sec-intro}
With the advancement of dynamical systems and analysis, the complexity of global analysis became evident. This stimulated the development of techniques for the study of local properties of a global problem. One of the methods of localization is based on the functions with bounded support. The history of applications of functions vanishing outside of a bounded set goes back to the works of Sobolev (\cite{S}) on generalized functions.\footnote{Sobolev was a student of N.M. Guenter, and this work was probably influenced by Guenter. However, Guenter was accused in the development of "abstract" science at the time when the USSR was desperate for an applied theory for the creation of atomic weapons. Guenter was forced to resign from his job.} Later,  functions with bounded support were used by Kurt Otto Friedrichs in his paper of 1944. His colleague, Donald Alexander Flanders, suggested the name mollifiers. Friedrichs himself acknowledged Sobolev's work on mollifiers stating that: "These mollifiers were introduced by Sobolev and the author". A special type of mollifier, which is equal to 1 in the area of interest and smoothly vanishes outside of a bigger set, we call a  bump function.
\par
The bump functions are used in the works studying local properties of dynamical systems in $\R^n$ in the book of Nitecki (\cite{N}). In this book, the Lemma on page 79 asserts that a diffeomorphism conjugates with its linear part in some small neighborhood of a hyperbolic fixed point. The proof is based on the assumption that the diffeomirphism can be modified outside of the neighborhood of the fixed point. Indeed, in $\R^n$ this can be achieved with the help of the bump function.
\par
There is, however, an obstacle in the local analysis of dynamical systems in infinite-dimensional spaces. The majority of infinite-dimensional Banach spaces do not have smooth bump functions. In the works \cite{B}, \cite{B-R}, \cite{R} we discuss the conditions when two $C^{\infty}$ diffeomorphisms on Banach spaces, possessing K-maps, are locally $C^{\infty}$-conjugate. K-maps are the maps that substitute bump functions and allow localization of Banach spaces.
\par
The main objectives of this paper are to present K-maps, to introduce the questions of existence of smooth K-maps on various infinite dimensional Banach spaces and their subsets and to prove the Borel Lemma for the spaces that admit smooth K-maps.

Let us recall the notion of a germ at a point. Let $X$ be a real Banach space, $Y$ be either a real or complex one, and $z\in X$ be a  point. Two maps $f_1$ and $f_2$ from neighborhoods $U_1$ and $U_2$ of the point $z$ into $Y$ are called {\it equivalent} if there is a neighborhood $V \subset U_1\cap U_2$   such that both of the maps coincide on $V$. A {\it germ at $z$} is an equivalence class.
Therefore, every local map $f$ from a neighborhood of $z$ into $Y$ defines a germ at $z$. Sometimes in the literature it is denoted by $[f]$, although in general we will use the same notation, $f$, as for the map.
 \par
We consider Fr{\'e}chet $C^q$-maps with $q=0,1,2...,\infty$. All notions and notations of differential calculus in Banach spaces we borrow from \cite{C}.
 \par
 For a given $C^q$-germ $f$ at $z$, we pose the following questions.
 \begin{question}\label{quest-exists-global}
 Does its global $C^q$-representative (i.e., a $C^q$-map defined on the whole $X$) exist?
 \end{question}
\begin{question}\label{quest-exists-global-bounded}
Assume that $f$ has  local representatives with bounded derivatives. Does there exist a global one with the same property?
\end{question}
Below, without loss of generality we assume that $z=0$.
\par
Usually for extending local maps described in Question~\ref{quest-exists-global} and Question~\ref{quest-exists-global-bounded} bump functions are used. The classical definition of a bump function (\cite{S}) is a non-zero bounded $C^q$-function from $X$ to $\R$ having a bounded support. We use a similar modified definition which is more suitable for our aims. Namely, a bump function at $0$ is a $C^q$-map $\delta_U:X \to \R$  which is equal to $1$ in a neighborhood  of $0$ and vanishing outside of a lager neighborhood $U$. If $f$ is a local representative of a $C^q$-germ defined in a neighborhood $V$ ($\overline{U}\subset V$) then

\begin{equation}\label{eqn-F} 
 F(x)=\left\{
\begin{array}{ll}
\delta_U(x)f(x),& x\in U\\
0,& x\notin U
\end{array}
\right.
\end{equation}

is a global $C^q$-representative of the germ $f$, and it solves at least the Question~\ref{quest-exists-global}. If, in addition, all derivatives of $\delta_U$ are bounded on the entire $X$, then (\ref{eqn-F}) solves both of the Questions. If these functions do exist for any $U$, then every germ has a global representative.

A continuous bump function exists in any Banach space. It suffices to set 
$\delta(x)=\tau(||x||)$, where $\tau$ is a continuous bump function at zero on the real line.
    Let $p=2n$ be an even integer. Then
                       $$  \delta(x)=\tau(||x||^p)$$
is a $C^\infty$-bump function at zero on $l_p$. Here $\tau$ is a $C^\infty$-bump function on the real line.
 
However, if $p$ is not an even integer, then $l_p$ space does not have $C^q$-smooth ($q>p$) bump functions (see~\cite{M}). The Banach-Mazur theorem states that any real separable Banach space is isometrically isomorphic to a closed subspace of $C[0,1]$.  Consequently, the space of $C[0,1]$ does not have smooth bump functions at all\footnote{Originally absence of smooth bump functions on $C[0,1]$ was proved in~\cite{K}, 24 years before the publication~\cite{M}.}. Spaces possessing $C^q$-bump functions at zero are called $C^q$-smooth (see~\cite{M}).
 
\begin{example}\label{example-integral}
The real function
           $$
          f(x)=\int_0^1 \frac{dt}{1-x(t)},\ \    x\in C[0,1]
 $$
defines a $C^\infty$ (which is even analytic) germ at zero. In spite of the absence of smooth bump functions, the germ has a
global $C^\infty$ representative. To show this, let $h$ be  a real $C^\infty$-function on the real line such that
\begin{equation}\label{eqn-h}
          h(s)=\left\{
\begin{array}{ll}
s, &|s|<1/3\\
0, &|s|>1/2. 
\end{array}
\right.
\end{equation}

Then the $C^\infty$-function
 $$
                F(x)=\int_0^1 \frac{dt}{1-h(x(t))}
 $$
coincides with $f$ in the ball $||x||<1/3$ and is a global representative of the germ with bounded derivatives of all orders.
\end{example}

\section{K-maps}\label{sec-K-maps}
\begin{definition}\label{def-K-map} A  {\it $C^q$-K-map} on a Banach space $X$ is a  global bounded $C^q$-representative of the germ of the identity map at the
origin.
\end{definition}

     In other words, it is a $C^q$-map $H: X \to X$ which coincides with identity in a neighborhood of zero and such that
 $$
                               \sup_x || H(x)||<\infty.
 $$
The following assertion explains the meaning of the notion.
 
\begin{theorem}\label{thm-extension} Let a space $X$ possesses a $C^q$-K-map $H$. Then for every Banach space $Y$ and any $C^q$-germ $f$ at zero from $X$ to $Y$ there exists a global $C^q$-representative. Moreover, if all derivatives of $H$ are bounded, and $f$ contains a local representative bounded together with all its derivatives, then it has a global one with the same property.
\end{theorem}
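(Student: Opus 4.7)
The plan is to build the extension by composing the germ with a rescaled version of the K-map $H$. The K-map gives $H(x)=x$ on some neighborhood $W$ of $0$, and its image lies in some ball $B(0,R)$.

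First I would introduce the scaled K-map $H_\varepsilon(x):=\varepsilon\, H(x/\varepsilon)$ for $\varepsilon>0$. Two features carry over from $H$: the map $H_\varepsilon$ is $C^q$, it equals the identity on the neighborhood $\varepsilon W$ of the origin, and its image is contained in $B(0,\varepsilon R)$. Now pick a local representative $f\colon V\to Y$ of the germ, with $V$ a neighborhood of $0$, and choose $\varepsilon>0$ small enough that $B(0,\varepsilon R)\subset V$. Then define
\[
F(x):=f\bigl(H_\varepsilon(x)\bigr),\qquad x\in X.
\]
Since $H_\varepsilon(X)\subset V$, this is a well-defined map on all of $X$. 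It is $C^q$ as a composition of $C^q$-maps, and on $\varepsilon W$ we have $H_\varepsilon(x)=x$, so $F$ coincides with $f$ on a neighborhood of $0$; hence $F$ is a global $C^q$-representative of the germ. This settles Question~\ref{quest-exists-global} under the hypotheses.

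For the second, stronger claim, assume in addition that every derivative of $H$ is bounded on $X$ and that a local representative $f$ is bounded on some neighborhood $V$ of $0$ together with all of its Fr\'echet derivatives $D^j f$, $0\le j\le q$. A short computation shows $D^j H_\varepsilon(x)=\varepsilon^{1-j} D^j H(x/\varepsilon)$, which is bounded on $X$ because $D^j H$ is. The Fa\`a di Bruno formula expresses $D^k F(x)$, for any $k\le q$, as a finite sum of multilinear expressions built from the derivatives $D^i f(H_\varepsilon(x))$ with $1\le i\le k$ and the derivatives $D^j H_\varepsilon(x)$ with $1\le j\le k$. Because $H_\varepsilon(X)\subset V$, each factor $D^i f(H_\varepsilon(x))$ is uniformly bounded, and each $D^j H_\varepsilon(x)$ is uniformly bounded by the previous remark, so $D^k F$ is bounded on $X$ for every $k\le q$. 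Boundedness of $F$ itself follows similarly from boundedness of $f$ on $V$.

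The only real step that needs care is the choice of $\varepsilon$: it must simultaneously make $\varepsilon R$ small enough that $B(0,\varepsilon R)$ lies in the domain $V$ of $f$ (and inside the ball where $f$'s derivatives are bounded, in the second part). Since the image of $H$ is globally bounded, this is a genuine finite scaling and not a limiting argument, so no analytic obstacle appears. The main conceptual point is that the K-map lets us \emph{concentrate} all of $X$ into an arbitrarily small neighborhood of $0$ by a smooth map that is the identity near $0$ — this is precisely the role usually played by a bump function in the formula~(\ref{eqn-F}), and it is what makes the composition $f\circ H_\varepsilon$ an everywhere-defined extension.
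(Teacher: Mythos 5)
Your proposal is correct and follows essentially the same route as the paper: rescale the K-map so that its (globally bounded) image is squeezed into the domain of a local representative, then compose, with the chain rule giving boundedness of all derivatives in the second part. The only difference is cosmetic — the paper writes the rescaling as $H_1(x)=\frac{\epsilon}{N}H\left(\frac{N}{\epsilon}x\right)$ and leaves the derivative bounds implicit, while you spell out the Fa\`a di Bruno estimate.
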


\begin{proof} Let $||H(x)||<N,\ x\in X$. Then the image $H(X)$ is contained in the open ball $U=\{x: ||x||<N\}$. Further, let $f$ be a representative of the germ defined on a neighborhood $U$, and let a closed ball $B_\epsilon=\{x:||x||\le\epsilon<N  \}\subset U$. The map
\begin{equation}
              H_1(x)=\frac{\epsilon}{N} H\left(\frac{N}{\epsilon} x\right)
\end{equation}
is a $C^q$-K-map also, and its image is contained in $U$. Therefore the map
 \begin{equation}\label{eqn-global-F}       
F(x)=f(H_1(x))
\end{equation}
is well-defined on the whole space $X$, and is a global $C^q$-representative of the germ $f$. If both of the maps $H$ and $f$ are bounded together with all of their derivatives, then $F$ possesses the same property. This completes the proof.
 \end{proof}
  
\section{Examples}\label{sec-examples}
Let us present spaces having K-maps.
 
\begin{itemize} 
\item[1.] Let $X$ be $C^q$-smooth, and let $\delta (x)$ be a $C^q$-bump function at zero. Then
 $$
                   H(x)=\delta(x) x
 $$
is a $C^q$-K-map. If the bump function is bounded together with all its derivatives, then $H$ has the same property.
 
\item[2.] Let  $X=C(M)$ be a space (a Banach algebra) of all continuous functions on a compact Hausdorff space $M$ with
 $$
                     ||x||=\max_t|x(t)| ,\ \  t\in M,
 $$
and let h be a $C^\infty$-bump function on the real line. Then the map
 \begin{equation}\label{C[01]-K-map}
                                     H(x)(t)=h(x(t))x(t),\ \  x\in X
 \end{equation}
is $C^\infty$-K-map with bounded derivatives of all orders.

\item[3.] More generally, let $X\subset C(M)$ be a subspace such that
 
\begin{equation}\label{ideal-property}
x(t)\in X \implies h(x(t))x(t)\in X
 \end{equation}
for any $C^\infty$  bump function $h$ on the real line.

 Then~(\ref{C[01]-K-map}) defines a $C^\infty$-K-map with bounded derivatives. For example, any ideal $X$ of the algebra satisfies (\ref{ideal-property}).

\item[4.] Let $X=C^n(M)$ be a space (which is also a Banach algebra)  of all $C^n$-functions on a smooth compact manifold $M$ with or without boundary, and
$$
         ||x||=\max_k\max_t ||x^{(k)}(t)||,\  k\leq n,\ t\in M.
 $$
Then (\ref{C[01]-K-map}) gives a $C^\infty$-K-map with bounded derivatives. The same holds for any closed subspace $X\subset C^n(M)$ satisfying (\ref{ideal-property}). As above, $X$ may be an ideal of the algebra.
 \end{itemize}

\begin{corollary}
Let  a space $X$ be as in the examples of items 1-4. Then for any Banach space $Y$ and any $C^q$-germ at zero there is a global $C^q$-representative. If the germ contains a local representative with bounded derivatives, then there is a global one with the same property.
\end{corollary}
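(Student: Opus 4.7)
The plan is to apply Theorem~\ref{thm-extension} directly, using the K-maps already exhibited in items 1--4 of Section~\ref{sec-examples}; the corollary is essentially a repackaging of that theorem once the examples are known to supply K-maps with the required qualitative properties.

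First, I would confirm that each of the four cases really does yield a $C^q$-K-map whose derivatives of all orders are bounded on $X$. In item 1 the map $H(x)=\delta(x)x$ is manifestly a $C^q$-K-map; since $\delta$ and each of its derivatives vanish outside a bounded set, where $\|x\|$ is itself bounded, the Leibniz rule forces every derivative of $H$ to be bounded on $X$. In items 2--4 the Nemytskii-type operator $H(x)(t)=h(x(t))x(t)$ is built from the scalar function $s\mapsto h(s)s$, which is $C^\infty$ with compactly supported, hence bounded, derivatives of every order. On $C(M)$ this transfers directly to smoothness of $H$ with uniformly bounded derivatives; on $C^n(M)$ the Banach algebra structure is invoked in addition, so that iterated products and compositions remain in $C^n(M)$ with norm controlled by the scalar derivatives of $h(s)s$. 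The invariance condition~(\ref{ideal-property}) is exactly what ensures $H(X)\subset X$ in the subspace cases.

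Second, with a K-map $H$ of the required quality in hand, Theorem~\ref{thm-extension} applies verbatim. For any Banach space $Y$ and any $C^q$-germ $f$ at zero, the rescaled composition $F(x)=f(H_1(x))$ from~(\ref{eqn-global-F}) produces the global $C^q$-representative. Because $H$ has bounded derivatives of all orders, the second clause of the theorem yields the bounded-derivatives conclusion whenever $f$ admits a local representative with that property.

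The only step that is more than bookkeeping is the verification that the Nemytskii operator in item 4 is $C^\infty$ with bounded derivatives on $C^n(M)$; this is where I would expect to spend the most care, iterating the chain rule against the algebra structure. Everything else is an immediate consequence of Theorem~\ref{thm-extension}.
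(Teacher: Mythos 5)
Your proposal matches the paper's (implicit) argument exactly: the corollary is stated without proof as an immediate consequence of Theorem~\ref{thm-extension} applied to the K-maps exhibited in items 1--4, which is precisely your plan. One small caution: in item 1, bounded support of $\delta$ and its derivatives does not by itself force their boundedness in infinite dimensions (closed bounded sets need not be compact), which is why the paper states the bounded-derivative conclusion there only under the explicit extra hypothesis that the bump function is bounded together with all its derivatives.
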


\section{The Borel lemma for Banach spaces}\label{sec-Borel-lemma}
Let $X$ be a linear space over a field $\lowK$ ($char \ \lowK =0$) and $Y$ be a linear space over a field $\K$ ($\lowK\subset\K$). A map $P_j:X \to Y$ is called polynomial homogeneous map of degree $j$ if there is a $j$-linear map
$$
                   g: 
\underbrace{
X\times X\times ...\times X 
}_{j}
\to Y
$$
such that $P_j(x)=g(x,x,...,x)$. 
The map $g$ is not unique, but there is a unique symmetric one. We will assume that $g$ is symmetric.
Then, the first derivative of $P_j$ at a point $z\in X$, is a linear map $X \to Y$, and can be calculated by the formula
 $$
          P'_j(z)(x)=jg(z,......,z,x)
 $$
In general, the derivative of order $n\leq j$, is a homogeneous polynomial map of degree $n$ and equals
 $$
          P_j^{(n)}(z)(x)^n=j(j-1)....(j-n+1)g(z,...,z,x,...x)
 $$
 In particular,
 $$
          P_j^{(j)}(z)(x)^j=j!g(x,...,x)=j!P_j(x)
 $$
does not depend on $z$. And lastly,  for $n>j$, $P_j^{(n)}(z)=0$.
 
It follows that all derivatives of $P_j$ at zero are zero, except for the order $j$.
 
The latter equals  $P_j^{(j)}(0)(x)=j!P_j(x)$.

Let now $X$ and $Y$ be Banach spaces. Recall that $X$ must be real, while $Y$ can be real or complex. Let $f: X\to Y$ be  a local $C^\infty$ map. Then
 \begin{equation}\label{P-j}
                        P_j(x)=f^{(j)}(0)(x)^j
 \end{equation}
for any $j=0,1,....$ is a continuous polynomial map of degree $j$. Therefore, for some $c_j>0$ we have the estimate
\begin{equation}\label{estimate}
||P_j(x)||\leq c_j ||x||^j ,\   \ x\in X.
\end{equation}
 
\begin{question}\label{surj} Given a sequence $\{P_j\}_{j=0}^{\infty}$ of continuous polynomial maps from $X$ to $Y$ of degree $j$, does there exist a $C^\infty$-germ $f: X \to Y$, which satisfies (\ref{P-j}) for all $j=0,1,...$? \end{question}

The classical Borel lemma states that, given a sequence of real numbers $\{a_n\}$, there is a $C^\infty$ function $f$ on the real
line such that  $f^{(n)}(0)=a_n$. This means that answer to the Question~\ref{surj} is positive for $X=Y=\R$. The same is true for finite-dimensional $X$ and $Y$.

\begin{theorem}[The Borel lemma] Let a Banach space X possesses a $C^\infty$-K-map with bounded derivatives of all orders.
Then for any Banach space Y and any sequence $\{P_j\}_{j=0}^{\infty}$ of continuous homogeneous polynomial maps from $X$ to $Y$ there is a
$C^\infty$-map from $X$ to $Y$ with bounded derivatives of all orders such that (\ref{P-j}) is satisfied for all $j=0,1,...$
 \end{theorem}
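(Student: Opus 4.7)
The plan is to imitate the classical one-dimensional Borel construction, replacing bump functions by rescaled K-maps. Write $N=\sup_x\|H(x)\|$ and fix $\epsilon>0$ with $H$ equal to the identity on $\|x\|<\epsilon$; for $t\in(0,1]$ set $H_t(x)=tH(x/t)$. Each $H_t$ is a bounded $C^\infty$-K-map that is the identity on $\|x\|<\epsilon t$, with image in the ball of radius $tN$, and satisfies $\|H_t^{(s)}\|_\infty=t^{1-s}\|H^{(s)}\|_\infty$ for every $s\ge 1$. I would seek the global representative in the form
\begin{equation*}
 f(x)=\sum_{j=0}^{\infty} f_j(x),\qquad f_j(x)=\frac{1}{j!}P_j(H_{t_j}(x)),
\end{equation*}
with parameters $t_j\in(0,1]$ to be chosen later.

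Since $H_{t_j}(x)=x$ on $\|x\|<\epsilon t_j$, the summand $f_j$ coincides with $\frac{1}{j!}P_j$ in a neighborhood of $0$, so its derivatives at $0$ match those of $\frac{1}{j!}P_j$. By the formulas displayed before the theorem, $P_j^{(k)}(0)=0$ whenever $k\ne j$, while $P_j^{(j)}(0)(x)^j=j!\,P_j(x)$; thus $f_j^{(k)}(0)(x)^k$ equals $P_j(x)$ if $k=j$ and vanishes otherwise. Once termwise differentiation of the series is justified, this delivers $f^{(k)}(0)(x)^k=P_k(x)$ for every $k$.

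The heart of the proof is uniform convergence on $X$ of all formal derivative series. Applying the Faà di Bruno formula to $P_j\circ H_{t_j}$, together with the standard estimate $\|P_j^{(m)}(z)\|\le A_{j,m}\|z\|^{j-m}$ for $m\le j$ (and $P_j^{(m)}=0$ for $m>j$), with $\|H_{t_j}(x)\|\le t_j N$, and with $\|H_{t_j}^{(s)}\|_\infty\le t_j^{1-s}\|H^{(s)}\|_\infty$, each term indexed by a partition of $\{1,\dots,k\}$ into blocks of sizes $s_1,\dots,s_m$ satisfying $\sum_i s_i=k$ carries a power
\begin{equation*}
 t_j^{(j-m)+\sum_i(1-s_i)}=t_j^{\,j-k}
\end{equation*}
independent of the partition. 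Summing over partitions yields $\|d^k(P_j\circ H_{t_j})\|_\infty\le C(j,k)\,t_j^{\,j-k}$, where $C(j,k)$ depends only on $j$, $k$, $P_j$, and $H$.

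I would then pick $t_j\in(0,1]$ inductively so small that $\frac{C(j,k)}{j!}t_j^{\,j-k}\le 2^{-j}$ for every $k=0,1,\dots,j-1$; this is possible because only finitely many constraints of the form $t_j\le\bigl(j!\,2^{-j}/C(j,k)\bigr)^{1/(j-k)}$ must hold. For each fixed $k$, the tail $\sum_{j>k}\|f_j^{(k)}\|_\infty$ is then dominated by $\sum_{j>k}2^{-j}<\infty$, while the head $\sum_{j\le k}\|f_j^{(k)}\|_\infty$ is a finite sum of bounded $C^\infty$ maps (each $P_j\circ H_{t_j}$ has bounded image and bounded derivatives of all orders). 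Standard termwise differentiation of uniformly convergent series of $C^\infty$ maps then gives $f\in C^\infty(X,Y)$ with every $f^{(k)}$ bounded, and justifies the derivative computation at $0$. I expect the main technical obstacle to be the Faà di Bruno bookkeeping showing that every term comes with the same exponent $t_j^{j-k}$ regardless of the partition; once this cancellation is in hand, the inductive choice of $t_j$ essentially forces the rest.
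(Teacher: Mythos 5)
Your proposal is correct and follows essentially the same route as the paper: both rescale the K-map to $H_{t_j}(x)=t_jH(x/t_j)$, form the series $\sum_j\frac{1}{j!}P_j(H_{t_j}(x))$, and rely on the key estimate $\|d^k(P_j\circ H_{t_j})\|_\infty\le C(j,k)\,t_j^{\,j-k}$ to force $C^\infty$-convergence. Your Fa\`a di Bruno bookkeeping and explicit choice of $t_j$ simply supply details that the paper's proof states without justification.
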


\begin{proof} Let $H$ be a $C^\infty$-K-map at zero with bounded derivatives on $X$. Set $H_j(x)=\epsilon_j H(x/\epsilon_j)$. For a given $\epsilon_j$ the map $H_j(x)$ is a K-map. 
 
Then the map $P_j(H_j(x))$ belongs to $C^\infty(X,Y)$, and all its derivatives at $0$ are zero, except for  the order $j$. The latter equals to $P_j^{(j)}(0)(x)^j=j!P_j(x)$. In addition, all derivatives of the map are bounded, and the derivative of order $n$ allows the following estimate
 $$
                        ||(P_j(H_j(x))^{(n)}||\leq \epsilon_j^{j-n}c_{j,n}
$$
with constants $c_{j,n}$ depending only on the maps $P_j$, $H$, and not depending on a choice of $\epsilon_j$. 
Therefore, under an appropriate choice of $\epsilon_j$ the series
 $$
     f(x)=\sum_0^\infty \frac{1}{j!} P_j\left(H_j(x)\right)
$$
converges in $C^\infty$ topology to a map from $X$ to $Y$. It is clear that
                      $$f^{(n)}(0)(x)^n=P_n(x).$$
This equality proves the statement.
\end{proof}
 
\begin{corollary} Let a space X  be as in the examples listed in items 1-4 of Section~\ref{sec-examples}. 
Then for any Banach space Y and any sequence $\{P_j\}_{j=0}^{\infty}$ of continuous homogeneous polynomial maps from $X$ to $Y$ there is a
$C^\infty$-map from $X$ to $Y$ with bounded derivatives of all orders such that (\ref{P-j}) is satisfied for all $j=0,1,...$
\end{corollary}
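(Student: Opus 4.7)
The plan is to imitate the classical Borel construction on the real line, with rescaled K-maps playing the role of dilated bump functions. For a sequence $\epsilon_j > 0$ to be chosen later, set $H_j(x) = \epsilon_j H(x/\epsilon_j)$. Each $H_j$ is itself a $C^\infty$-K-map with bounded derivatives; it coincides with the identity on a neighborhood of $0$ (one that shrinks with $\epsilon_j$), and $\|H_j(x)\|\le \epsilon_j\sup_x\|H(x)\|$. Since $P_j$ is homogeneous of degree $j$, the candidate extension is the series
\[
f(x) \;=\; \sum_{j=0}^{\infty} \frac{1}{j!}\, P_j\!\bigl(H_j(x)\bigr),
\]
and the whole problem reduces to choosing $\epsilon_j$ so that this series converges in the $C^\infty$ topology with bounded derivatives on all of $X$.

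Two algebraic observations make the Taylor coefficients at $0$ come out right. First, $H_j$ agrees with the identity in a neighborhood of $0$, so there $P_j\circ H_j = P_j$. Second, as noted in the paragraphs preceding the theorem, every derivative of the polynomial map $P_j$ at $0$ vanishes except the one of order $j$, which equals $j!\,P_j$. Together these give $(P_j\circ H_j)^{(n)}(0) = 0$ for $n\neq j$ and $(P_j\circ H_j)^{(j)}(0)(x)^j = j!\,P_j(x)$. Once termwise differentiation is justified, only the $j=n$ summand survives in $f^{(n)}(0)$, yielding $f^{(n)}(0)(x)^n = P_n(x)$.

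The key quantitative step is the uniform estimate on $(P_j\circ H_j)^{(n)}$ that dictates how to choose $\epsilon_j$. Writing the $n$-th derivative by the Faà di Bruno formula, each summand has the shape $P_j^{(k)}(H_j(x))\bigl[H_j^{(m_1)}(x),\ldots,H_j^{(m_k)}(x)\bigr]$ with $m_1+\cdots+m_k=n$, $m_i\ge 1$ and $k\le\min(n,j)$. Since $H_j^{(m)}(x) = \epsilon_j^{\,1-m}H^{(m)}(x/\epsilon_j)$, each factor from $H_j$ contributes a scale $\epsilon_j^{\,1-m_i}$ times a bounded quantity, while $\|P_j^{(k)}(H_j(x))\|$ is controlled by $\|H_j(x)\|^{j-k}\le(\epsilon_j\sup_x\|H\|)^{j-k}$. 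Collecting powers of $\epsilon_j$ yields $(j-k)+\sum_i(1-m_i)=j-n$, so that uniformly in $x\in X$,
\[
\bigl\|(P_j\circ H_j)^{(n)}(x)\bigr\| \;\le\; c_{j,n}\, \epsilon_j^{\,j-n},
\]
with constants $c_{j,n}$ depending only on $P_j$, $H$ and $n$, but not on the choice of $\epsilon_j$.

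Given this bound, I would pick $\epsilon_j\in(0,1]$ small enough that $c_{j,n}\,\epsilon_j^{\,j-n}/j!\le 2^{-j}$ for every $n=0,1,\ldots,j-1$; this is finitely many conditions with positive exponents of $\epsilon_j$, so it is always achievable. The tail $\sum_{j>n}\frac{1}{j!}(P_j\circ H_j)^{(n)}$ then converges absolutely and uniformly on $X$, while the initial block $\sum_{j\le n}$ is a finite sum of globally bounded $C^\infty$ maps. Hence $f\in C^\infty(X,Y)$ with bounded derivatives of every order, and the Taylor calculation of the second paragraph gives the desired identification $f^{(n)}(0)(x)^n = P_n(x)$. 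The step I expect to demand the most care is the Faà di Bruno bookkeeping yielding the uniform exponent $j-n$; once that estimate is secured, everything else reduces to the standard convergence argument from the one-dimensional Borel lemma.
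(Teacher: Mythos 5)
Your construction is exactly the paper's proof of the Borel lemma (Theorem 4.2): the same rescaled K-maps $H_j(x)=\epsilon_j H(x/\epsilon_j)$, the same series $f=\sum_j \frac{1}{j!}P_j(H_j(x))$, and the same key estimate $\|(P_j\circ H_j)^{(n)}\|\le c_{j,n}\epsilon_j^{\,j-n}$, from which the corollary follows since the spaces in items 1--4 supply the required $C^\infty$-K-map with bounded derivatives. Your Fa\`a di Bruno bookkeeping and explicit choice of $\epsilon_j$ simply fill in details the paper leaves implicit, so the proposal is correct and takes essentially the same route.
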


\section{Open questions}\label{sec-questions}
\begin{itemize}
\item[1.] For which spaces do $C^\infty$-K-maps exist? Do Banach spaces without $C^{\infty}$-K-maps  exist? 
In particular, do they exist on $l_p$, with non-even $p$?

\item[2.] How to extend Theorem~\ref{thm-extension} on germs defined on a closed subset $S\subset X$? For this construction we need to define {\it smooth K-maps at $S$}. More precisely, generalizing the definition of germs at a point, we will say that maps $f_1$ and $f_2$ from neighborhoods $U_1$ and $U_2$ of $S$ into $Y$ are {\it equivalent}, if they coincide in a (smaller) neighborhood of $S$.
Every equivalence class is called a {\it germ at $S$}. We pose the same question. Given a $C^q$-germ at $S$ does there exist a global representative? If we assume that the $C^q$-bump functions at $S$ exist and equal to identity in a neighborhood  of $S$, then the answer is positive. Moreover, let there exist a $C^q$-map $H:X \to X$ whose image $H(X)$ is contained in a neighborhood  $U$ of $S$
and which is equal to the identity map in a smaller neighborhood. Then every local map $f$ defined in $U$ can be extended on the whole $X$.  It suffices to set $F(x)=f(H(x))$.
\begin{example}Assume that $X$ has a $C^q$-K-map $H$ at $0$, and let $S=B_r (z)$ be a ball of radius $r$ centered at $z$. Let
               $$         U=\{x:||x||<r+\delta\}
$$
be an open ball. Then, under an appropriate choice of a constant $c$,  the map
                   $$H_1(x)=z+\frac{1}{c}H(c(x-z))$$
is the identity map in a neighborhood of $B_r$, while its image is contained in $U$. Therefore, every $C^q$-germ at $B_r (z)$ contains a global representative.
\end{example}
So, the question is for which pairs $(S,X)$ do similar constructions exist? In particular, can a smooth K-map be constructed for bounded $S$? For example, we do not know whether a smooth K-map can be constructed for a sphere $S=\{x\in C[0,1]: ||x||=r\}$. 

The same question can be asked for $S$ being an arbitrary subspace of $X=C[0,1]$. If the answer is positive, then the Banach-Mazur theorem will imply that any separable Banach space possesses a smooth K-map.

\item[3.] The Borel lemma for finite-dimensional spaces is a particular case of the well-known Whitney extension theorem from a closed set $S\subset \R^n$. What is an infinite-dimensional version of the Whitney theorem? 
\end{itemize}

\bibliographystyle{amsalpha}

\end{document}